\newtheorem{theorem}{Theorem}[section]
\newtheorem{lemma}[theorem]{Lemma}
\newtheorem{proposition}[theorem]{Proposition}
\theoremstyle{definition}
\newtheorem{definition}[theorem]{Definition}
\theoremstyle{remark}
\newtheorem{remark}[theorem]{Remark}
\numberwithin{equation}{section}
\def\R{{\mathbb R}}
\def\N{{\mathbb N}}
\def\E{{\mathbb E}}
\def\P{{\mathbb P}}
\def\ra{\rightarrow}
\def\eqdist{\stackrel{(d)}{=}}
\renewcommand{\|}{\Vert}
\def\snp{\mathbb{S}_p^{n-1}}
\def\alp{X}
\def\anp{\alp^{(n,p)}}
\def\ynp{Y^{(n,p)}}
\def\lnp{\mathcal{L}_{n,p}}
\def\lnpy{L_{n,p}^Y}
\begin{document}


\title{A conditional limit theorem for high-dimensional $\ell^{p}$ spheres}

\author{Steven Soojin Kim}
\address{Division of Applied Mathematics \\ Brown University \\ 182 George St., Providence, RI }
\email{steven\_kim@brown.edu}
\thanks{SSK was partially supported by a Department of Defense NDSEG fellowship.}

\author{Kavita Ramanan}
\address{Division of Applied Mathematics\\ Brown University  \\ 182 George St., Providence, RI }
\email{kavita\_ramanan@brown.edu}
\thanks{KR was partially supported by ARO grant W911NF-12-1-0222 and NSF grant DMS 1407504.}

\thanks{SSK and KR would also like to thank Microsoft Research New England for their hospitality during the Fall of 2014, when some of this work was carried out.}

\begin{abstract} 
The study of high-dimensional distributions is of interest in probability theory, statistics and asymptotic convex geometry, where the object of interest is the uniform distribution on a convex set in high dimensions. The $\ell^p$ spaces and norms are of particular interest in this setting. In this  paper, we establish a limit theorem for distributions on $\ell^p$ spheres, conditioned on a rare event, in a high-dimensional geometric setting. As part of our proof, we establish a certain large deviation principle that is also relevant to the study of the tail behavior of random projections of $\ell^p$ balls in a high-dimensional Euclidean space. 
\end{abstract}

\subjclass[2010]{Primary 60F10, 52A20; secondary 52A23, 60D05}
\keywords{conditional limit theorems,  $\ell^p$ spheres, large deviations, Gibbs conditioning
principle, cone measure, surface measure, relative entropy, Wasserstein topology}

\maketitle 

\vspace{-0.3in}

\section{Introduction} \label{sec-intro}

The classical Poincar\'e-Maxwell-Borel lemma (see, e.g., \cite{diaconis1987dozen}) states that for any fixed number of coordinates of a random vector drawn uniformly from a high-dimensional Euclidean sphere, the joint distribution of the coordinates is approximately standard Gaussian. An analogous description (recalled precisely in Proposition \ref{lem-pbm} to follow) holds in the setting of $\ell^p$ spheres, for general $p\in[1,\infty]$, which yields an asymptotic probabilistic characterization of the geometry of an $\ell^p$ sphere. 

In this paper, we establish a \emph{conditional} analog of the aforementioned \emph{unconditional} result. Roughly stated, we prove the following asymptotic probabilistic description of the geometry of an $\ell^p$ sphere under an $\ell^q$ norm constraint: 
\begin{quote}
\itshape For $1 \le q<p < \infty$, in high dimensions, a random point on the $\ell^p$ sphere of $\R^n$ conditioned on having a sufficiently small $\ell^q$ norm is close  (in the sense of distribution) to a random point drawn from an appropriately scaled $\ell^q$ sphere of $\R^n$.
\end{quote}
That is, conditioning on a small $\ell^q$ norm induces a \emph{probabilistic} change that admits a \emph{geometric} interpretation.

As part of our proof, we establish a large deviation principle for the empirical measure of the coordinates of a random vector distributed according to the uniform (cone or surface) measure on a suitably scaled $\ell^p$ sphere in $\R^n$, for $p\in[1,\infty]$, with respect to the $q$-Wasserstein topology for every $q<  p$. This large deviation principle is also of relevance to the study of the tail behavior of random projections of $\ell^p$ balls.

The remainder of the paper is organized as follows: 
\begin{itemize}
\item \textsc{Sect. \ref{sec-main}:} We precisely state our main result, Theorem \ref{cor-asymp}, a conditional limit theorem for $\ell^p$ spheres.

\item \textsc{Sect. \ref{sec-ldpres}:} We establish a large deviation principle (LDP) for the sequence of empirical measures of the coordinates of the random vector $n^{1/p}X^{(n,p)}$ distributed uniformly on the sphere $n^{1/p}\, \snp$. We emphasize that the aforementioned LDP holds in the $q$-Wasserstein topology for every $q < p$ (which is stronger than the usual weak topology), and as summarized in Remark \ref{rmk-wasswhy}, we crucially utilize this fact in order to prove Proposition  \ref{prop-clln}, as well as to establish a variational formula in a related paper \cite[Theorem 2.7]{gkr2}.

\item \textsc{Sect.\ \ref{sec-gibbscond}:} We apply the LDPs of Sect.\ \ref{sec-ldpres} and recall the ``Gibbs conditioning principle" in order to establish the proof of our main result Theorem \ref{cor-asymp}.
\end{itemize}

\section{Statement of Main Result}\label{sec-main}

 To precisely state the unconditional limit theorem for $\ell^p$ spheres, for $n\in \N$, let $\|\cdot\|_{n,p}$ denote the $\ell^p$ norm on $\R^n$, and write $\snp \doteq \{a\in \R^n : \|a\|_{n,p} = 1\}$ for the unit $\ell^p$ sphere in $\R^n$. There are two natural notions of ``uniform" measure on the sphere $\snp$: the \emph{cone measure} $\gamma_{n,p}$, and the \emph{surface measure} $\sigma_{n,p}$, defined  in Definitions \ref{def-cone} and \ref{def-surf}, respectively. Assume that all random variables we define are supported on a common probability space $(\Omega,\mathcal{F},\P)$. 

\begin{definition}
For $n\in \N$, $p\in [1,\infty]$, we write $\anp = (\anp_1, \dots, \anp_n)$ for an $n$-dimensional random vector in $\snp$, distributed according to \emph{either} the surface measure $\sigma_{n,p}$ \emph{or} the cone measure $\gamma_{n,p}$ (i.e., all of our results hold under either distribution for $\anp$, unless explicitly specified otherwise).
\end{definition}

Let $\mu_p$ denote the \emph{generalized Gaussian} distribution with location 0, shape $p$, and scale $p^{1/p}$. That is, for $p\in[1,\infty)$,
\begin{equation}\label{gengsn}
  \mu_p(dy) \doteq \frac{1}{2p^{1/p}\Gamma(1+\tfrac{1}{p})} e^{-|y|^p/p}\,dy, \quad y\in \R.
\end{equation}
For $p =\infty$, define
\begin{equation*}
  \mu_\infty(dy) = \tfrac{1}{2}\mathbf{1}_{[-1,1]}(y)\, dy, \quad y\in \R.
\end{equation*}
The aforementioned ``$\ell^p$ version" of the Poincar\'e-Maxwell-Borel lemma is stated as follows. Let $\Rightarrow$ denote weak convergence of probability measures.

\begin{proposition}\label{lem-pbm}
Fix $p \in [1,\infty]$ and $k\in \N$. As $n\ra\infty$,
\begin{equation*}
\textnormal{Law}\left[n^{1/p}(\anp_1,\dots,\anp_k)\right] \Rightarrow \mu_p^{\otimes k}.
\end{equation*}
\end{proposition}

This result is due to \cite[Theorems 4.1 and
4.4]{rachev1991approximate} in the case of the cone measure, and
\cite{mogul1991finetti} in the case of the surface measure. In
addition, the work \cite[Theorems 3 and 4]{naor2003projecting} offers a simplification of the proof for both the cone and surface measures. 

Note that the scaling $n^{1/p}$ is natural from a geometric perspective, due to the following reasoning: the typical coordinate of the random vector $\anp$ (uniformly distributed on the unit $\ell^p$-sphere of dimension $n$) has length $n^{-1/p}$ (since the $n$-fold sum of $p$-th powers of the coordinates is constrained to sum to 1). Therefore, in order to have the joint law of the coordinates of $\anp$ be non-trivial, one must scale the random vector $\anp$ appropriately; that is, by a factor of $n^{1/p}$.

Our main result, Theorem \ref{cor-asymp}, yields a conditional analog of Proposition \ref{lem-pbm}. To state this precisely, we first set some notation. For $\beta > 0$ and $q\in[1,\infty]$, let  
\begin{equation}\label{uncondlaw}
 \lambda_{n,q}^{(\beta,k)} \doteq \textnormal{Law}\left[ \beta^{1/q}n^{1/q}(X_1^{(n,q)}, \dots, X_k^{(n,q)}) \right].
\end{equation}
In addition,  let $\widehat{\lambda}_{n,p|q}^{(\beta,k)}$ denote the conditional law,
\begin{equation}\label{condlaw}
  \widehat{\lambda}_{n,p|q}^{(\beta,k)} \doteq  \P\left(\left. n^{1/p}(X_1^{(n,p)},\dots,X_k^{(n,p)}) \in \,\cdot\, \, \right| \,\,\tfrac{1}{n} \|n^{1/p}X^{(n,p)}\|_{n,q}^q \le \beta \right).
\end{equation}
For $1\le q < p <\infty$,  define
\begin{equation}\label{betapq}
\beta_{p,q} \doteq \frac{1}{q}\left(\frac{\Gamma(\tfrac{1}{q})}{\Gamma(\tfrac{p+1}{q})}\right)^{q/p}.
\end{equation}
Lastly, let $\rho$ be a metric which metrizes the topology of weak convergence of probability measures (e.g., L\'evy-Prohorov, bounded Lipschitz) \cite[Appendix III]{billingsley2013convergence}. 

\begin{theorem}[Conditional limit theorem]\label{cor-asymp}
Assume $1 \le q < p < \infty$, and $\beta \le \beta_{p,q}$.  Then, for fixed $k\in \N$,
\begin{equation}\label{rhoclose}
\lim_{\epsilon \downarrow 0} \lim_{n\ra\infty}  \rho \left(   \, \widehat{\lambda}_{n,p|q}^{(\beta+\epsilon,k)} \,\, , \,\,  \lambda_{n,q}^{(\beta,k)} \, \right) = 0.
\end{equation}
\end{theorem}

We defer the proof of Theorem \ref{cor-asymp} to Sect.\
\ref{sec-gibbscond}. It is worth emphasizing that a suitable choice 
of the conditioning event, as in the expression \eqref{condlaw}, is required
to obtain a meaningful conditional limit theorem.  Theorem \ref{cor-asymp} shows that for appropriate values of $\beta$, the conditioning event
  \begin{equation}\label{anb}
  A_{n,\beta}\doteq\{a\in \R^n: \tfrac{1}{n} \|a\|_{n,q}^q \le \beta \},
\end{equation}
provides a geometric example of a \emph{rare event} in high dimensions
for which such a conditional limit theorem can be established, in the
sense that  $\P(n^{1/p}\anp \in A_{n,\beta+\epsilon})$ decays to zero
as $n\ra\infty$, in  fact, at an exponential rate.  A basic intuition from the theory of {large deviations} suggests that for large $n$, given a rare event such as $A_{n,\beta+\epsilon}$, the conditional law will be close to a ``tilted" measure under which the event $A_{n,\beta+\epsilon}$ is in fact typical. The goal of finding a measure under which the event $A_{n,\beta+\epsilon}$ is typical motivates the scaling $\beta^{1/q}$ and the random variable $X^{(n,q)}$ in \eqref{uncondlaw}. That is, it follows from the definition  of $\sigma_{n,q}$ and $\gamma_{n,q}$ that for all $n\in \N$ and $\epsilon > 0$,   $\P(\beta^{1/q} n^{1/q}X^{(n,q)}\in A_{n,\beta+\epsilon}) =1$. The particular choice of $\beta \le \beta_{p,q}$ is clarified in Remarks \ref{rmk-beta}, \ref{rmk-beta2}, and \ref{rmk-beta3}.

The preceding paragraph's general ideas of conditioning and tilting
have been realized to great effect in statistical mechanics, where
large deviations theory can describe the most probable state of a
system of particles under an energy constraint (see, e.g., the surveys
\cite{bertini2014macroscopic, ellis2012entropy}). Of particular
utility is the so-called ``Gibbs conditioning principle", which
transforms an LDP for empirical measures into a statement about the
most probable behavior of the underlying sequence of random variables,
conditional on a rare event.   A central motivation of this work is to
employ such a conditional probabilistic perspective in a
high-dimensional geometric setting, by investigating how LDPs can
inform the analysis of ``geometric" rare events in high dimensions,
like $A_{n,\beta}$ of \eqref{anb}.   

More generally, this work continues a theme of work at the intersection of large deviations theory and high-dimensional geometry. We refer the interested reader to \cite{gkr2} for large deviation analysis of $k$-dimensional projections of random vectors drawn from $\ell^p$ balls, \cite{kabluchko2017high} for central limit theorem analysis of such projections, and \cite{alonso2016large} for large deviation analysis of $k_n$-dimensional projections of random vectors where the lower-dimension $k_n$ grows with the size of the higher dimension $n$.


\section{Large deviations results} \label{sec-ldpres}

In this section, we state and prove a large deviation principle for the following empirical measure: for $n\in\N$, $p\in[1,\infty]$, define
\begin{equation}\label{empir}
\lnp \doteq \frac{1}{n}\sum_{i=1}^n \delta_{n^{1/p}\anp_i}.
\end{equation}
In Sect.\ \ref{ssec-bkgrnd}, we review some basic elements of large deviation theory. In Sect.\ \ref{sec-empcone} (resp., Sect.\ \ref{sec-surface}), we prove the LDP for $(\lnp)_{n\in\N}$ in the Wasserstein topology, assuming $\anp$ is distributed according to the cone measure  (resp., the surface measure). This Sanov-type LDP  complements the existing Glivenko-Cantelli-type law of large numbers and Donsker-type  central limit theorem for $(\lnp)_{n\in\N}$ under the cone measure \cite[Theorem 1]{spruill2007asymptotic}. 

\subsection{Background on large deviations}\label{ssec-bkgrnd}

For a broad review of large deviations, we refer to \cite{DemZeibook}. In particular, recall the following definition:

\begin{definition}\label{def-ldp}
Let $\mathcal{X}$ be a topological space equipped with its Borel $\sigma$-algebra, and let $\mathcal{P}(\mathcal{X})$ denote the space of probability measures on $\mathcal{X}$. A sequence of probability measures $(\mu_n)_{n\in \N}\subset\mathcal{P}(\mathcal{X})$ is said to satisfy a \emph{large deviation principle} with a \emph{rate function} $\mathbb{I}:\mathcal{X}\ra[0,\infty]$ if $\mathbb{I}(\cdot)$ is lower semicontinuous, and for all Borel measurable sets $\Gamma\subset \mathcal{X}$, 
\begin{equation*}
  -\inf_{x\in \Gamma^\circ} \mathbb{I}(x) \le  \liminf_{n\ra\infty} \tfrac{1}{n} \log \mu_n(\Gamma^\circ) \le \limsup_{n\ra\infty} \tfrac{1}{n} \log \mu_n(\bar \Gamma) \le -\inf_{x\in \bar{\Gamma}} \mathbb{I}(x),
\end{equation*}
where $\Gamma^\circ$ and $\bar\Gamma$ denote the interior and closure of $\Gamma$, respectively. Furthermore, $\mathbb{I}$ is said to be a \emph{good rate function} if it has compact level sets. Similarly, we say the sequence of $\mathcal{X}$-valued random variables $(\xi_n)_{n\in \N}$ satisfies an LDP if the sequence of laws $(\P \circ \xi_n^{-1})_{n\in\N}$ satisfies an LDP.
\end{definition}

In the remaining sections, we will also make occasional reference to the following related notion.

\begin{definition}\label{def-etight}
A sequence of probability measures $(\mu_n)_{n\in\N} \subset \mathcal{P}(\mathcal{X})$ is said to be \emph{exponentially tight} if for every $\alpha < \infty$, there exists a compact set $K_\alpha \subset \mathcal{X}$ such that 
\begin{equation*}
  \limsup_{n \ra\infty} \tfrac{1}{n}\log\mu_n(K_\alpha^c) < -\alpha.
\end{equation*}
\end{definition}

There are two particularly well known examples of LDPs. For one, \emph{Cram\'er's theorem} (see, e.g., \cite[Sect. 2.2.2]{DemZeibook}) establishes an LDP for the sum of i.i.d.\ random variables, with the rate function shown to be the so-called Legendre transform of the log moment-generating function of the underlying distribution. For another, \emph{Sanov's theorem} (see, e.g., \cite[Sect 6.2]{DemZeibook}) establishes an LDP for the empirical measure of i.i.d.\ random variables, with the rate function shown to be the so-called relative entropy.

For $\nu,\mu\in\mathcal{P}(\mathcal{X})$, recall that the \emph{relative entropy} of $\nu$ with respect to $\mu$ is defined as
\begin{equation}\label{relent}
  H(\nu \| \mu) \doteq \left\{\begin{array}{ll} \int_\mathbb{R} \log (\tfrac{d\nu}{d\mu}) \, d\nu & \text{ if } \nu \ll \mu,\\
  +\infty & \text{ else,}  
\end{array}
\right.
\end{equation}
where $\nu \ll \mu$ denotes that $\nu$ is absolutely continuous with respect to $\mu$. For $q\in [1,\infty)$, define the $q$-th absolute moment map,
\begin{equation}\label{mqdef}
  m_q(\nu) \doteq \int_\R |x|^q \,\nu(dx), \quad \nu\in \mathcal{P}(\R).
\end{equation}
For $q=\infty$, let 
\begin{equation*}
m_\infty(\nu) \doteq \inf\{a>0: \nu([-a,a]^c) = 0\}, \quad \nu\in \mathcal{P}(\R).
\end{equation*}
For $p\in[1,\infty]$, define $\mathbb{H}_p:\mathcal{P}(\mathbb{R})\ra[0,\infty]$ to be a modification of the relative entropy with respect to $\mu_p$, perturbed by some $p$-th moment penalty,
 \begin{equation} \label{hpdefn}
  \mathbb{H}_p(\nu) \doteq \left\{ \begin{array}{ll}
 H(\nu \| \mu_p) + \tfrac{1}{p} (1 - m_p(\nu)) & \textnormal{ if } m_p(\nu) \le 1, \\
  +\infty & \textnormal{ else,} 
 \end{array}\right. 
  \end{equation}
where we take the convention $\frac{1}{\infty}=0$. We will show in Propositions \ref{prop-sanovcone} and \ref{prop-sanovsurface} that $(\lnp)_{n\in\N}$ satisfies an LDP with rate function $\mathbb{H}_p$.

Our LDP holds with respect to the so-called $q$-Wasserstein topologies on $\mathcal{P}_q(\R)$, introduced below. We refer to \cite[\S6]{villani2008optimal} for an extensive review of the Wasserstein topology.

\begin{definition}\label{def-wass}
Let $q\in[1,\infty]$, and let 
\begin{equation*}
  \mathcal{P}_q(\R) \doteq \{\mu \in \mathcal{P}(\R) : m_q(\mu) < \infty\}.
\end{equation*}
We equip this space with the $q$-\emph{Wasserstein} topology: a sequence $(\mu_n)_{n\in\N}\subset\mathcal{P}_q(\R)$ is said to converge to $\mu\in \mathcal{P}_q(\R)$ (with respect to the $q$-Wasserstein topology) if $\mu_n$ converges weakly to $\mu$ (denoted by $\mu_n\Rightarrow \mu$) and $m_q(\mu_n)\rightarrow m_q(\mu)$ as $n\rightarrow\infty$.
\end{definition}

\begin{remark}\label{rmk-wasswhy}
We consider the $q$-Wasserstein topology on probability measures (instead of, e.g., the weak topology or the $\tau$ topology) because we must consider a topology that is weak enough to allow an LDP to hold, but at the same time strong enough to allow certain moments to be continuous functionals of measures. In particular, we require a topology strong enough such that the moment map $m_q$ is continuous for $q<p$, which is used in the proofs of both the Gibbs conditioning result of Proposition \ref{prop-clln}  and the variational formula of \cite{gkr2} that relates ``quenched" and ``annealed" LDP rate functions for random projections of $\ell^p$ balls.
\end{remark}

\subsection{LDP under the cone measure}\label{sec-empcone}

In this section, we establish an LDP for $(\lnp)_{n\in\N}$ assuming $\anp$ is distributed according to the cone measure on $\snp$ defined below.

\begin{definition} \label{def-cone}
Let $\text{vol}_n(\cdot)$ denote the Lebesgue measure on $\R^n$, and let  $\gamma_{n,p}$ denote the \emph{cone measure} on $\snp$,
\begin{equation*}
\gamma_{n,p}(S)  \doteq \frac{ \text{vol}_n\left( \{ cx: x\in S, c\in[0,1]\}\right) } {\text{vol}_n ( \mathbb{B}_{n,p} )},
\end{equation*}
for $S$ a Borel measurable subset of $\snp$.
\end{definition}

\begin{proposition}[LDP under the cone measure]\label{prop-sanovcone}
Let $p\in[1,\infty]$ and assume $\anp \sim \gamma_{n,p}$. For $q<p$, the sequence of empirical measures $(\lnp)_{n\in \N}$ of \eqref{empir} satisfies an LDP in $\mathcal{P}_q(\R)$ with the strictly convex good rate function $\mathbb{H}_p$ of \eqref{hpdefn}.
\end{proposition}

The proof of Proposition \ref{prop-sanovcone} is given at the end of this subsection.

\begin{remark}
The primary obstacle in the proof of Proposition \ref{prop-sanovcone} (which differentiates it from the classical i.i.d.\ setting of Sanov's theorem) is that the coordinates $\anp_i$, $i=1,\dots, n$, are \emph{dependent}, by virtue of the fact that the vector $\anp$ is constrained to lie in the sphere $\snp$. Although LDPs for empirical measures of random vectors with other dependency structures have been established (such as \cite[\S6.3-6.6]{DemZeibook} for Markov chains or stationary sequences satisfying certain mixing conditions, or  \cite{arous1997large} for empirical spectral measures of random matrices), none of these apply in our setting.
\end{remark}

 \begin{remark}
It is natural to try to establish an LDP for $(\lnp)_{n\in\N}$ by
exploiting the fact that the sequence $(\alp^{(1,p)},
\alp^{(2,p)},\dots)$ can be viewed as an infinite triangular array,
where each row $\anp$, for $n\in \N$, is an exchangeable
vector. However, such an attempt fails. Indeed, on pg.\ 653 of
\cite{trashorras2002large}, which contains general large deviations
results for the empirical measure of a row-wise exchangeable
triangular array, it is stated that ``Even in the simple case of binary valued finite exchangeable random variables there is no general result for
the LD behavior of [the row-wise empirical measure]." The aforementioned quote is in reference to the result \cite[Theorem 2]{trashorras2002large}. That is, an exchangeable structure on its own is not sufficient for a general LDP result (and in particular, not sufficient for an LDP for $(\lnp)_{n\in\N}$). Instead, we take a different route, which relies on the following preliminary results.
\end{remark}

The analysis of the LDP for $(\lnp)_{n\in\N}$   is facilitated by the following probabilistic representation for  cone measures:

\begin{lemma}[{\cite[\S3]{rachev1991approximate} and \cite[Lemma 1]{schechtman1990volume}}] \label{lem-lprep}
Fix $n\in \N$ and $p\in[1,\infty]$. Suppose $X^{(n,p)} \sim \gamma_{n,p}$, and let $\ynp\sim \mu_p^{\otimes n}$. Then,
\begin{equation}\label{alphrep}
  \anp \eqdist \frac{Y^{(n,p)}}{\|Y^{(n,p)}\|_{n,p}}.
\end{equation}
\end{lemma}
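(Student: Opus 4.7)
The plan is to use a ``polar coordinate'' decomposition of Lebesgue measure on $\R^n$ adapted to the $\ell^p$ geometry, which in the Euclidean case $p=2$ reduces to the standard fact that a normalized Gaussian vector is uniform on the sphere. The core step is to establish the identity: for any Borel measurable $f:\R^n\to[0,\infty)$ and $p\in[1,\infty)$,
\[
  \int_{\R^n} f(y)\, dy \;=\; n\, \mathrm{vol}_n(\bnp) \int_0^\infty r^{n-1} \int_{\snp} f(r\theta)\, \gamma_{n,p}(d\theta)\, dr. \qquad (\star)
\]
Equivalently, under the map $\Phi:(0,\infty)\times \snp \to \R^n\setminus\{0\}$ with $\Phi(r,\theta)=r\theta$, Lebesgue measure pulls back to $n\,\mathrm{vol}_n(\bnp)\,r^{n-1}\,dr\otimes \gamma_{n,p}(d\theta)$. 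To prove $(\star)$, by a monotone class argument it suffices to check the identity on ``annular cone'' sets $\Phi([a,b]\times S) = \{c\theta : c\in[a,b],\, \theta\in S\}$ for $S\in\mathcal{S}_{n,p}$. For such sets the definition of $\gamma_{n,p}$ together with the scaling $\mathrm{vol}_n(\Phi([0,b]\times S)) = b^n\,\mathrm{vol}_n(\Phi([0,1]\times S)) = b^n\,\mathrm{vol}_n(\bnp)\,\gamma_{n,p}(S)$ gives, by subtraction, $\mathrm{vol}_n(\Phi([a,b]\times S)) = n\,\mathrm{vol}_n(\bnp)\,\gamma_{n,p}(S)\int_a^b r^{n-1}\,dr$, matching $(\star)$ applied to $f=\mathbf{1}_{\Phi([a,b]\times S)}$. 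A sanity check: taking $f=\mathbf{1}_{\bnp}$ both sides equal $\mathrm{vol}_n(\bnp)$.

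Next I would apply $(\star)$ to compute the law of $\ynp/\|\ynp\|_{n,p}$. For $p\in[1,\infty)$, the density of $\ynp \sim \mu_p^{\otimes n}$ is $f_Y(y) = c_p^n\,e^{-\|y\|_{n,p}^p/p}$, which depends on $y$ only through $r=\|y\|_{n,p}$. Writing $f_Y(y) = g(\|y\|_{n,p})$, the identity $(\star)$ immediately factorizes the joint law of the radial part $R = \|\ynp\|_{n,p}$ and the angular part $\Theta = \ynp/\|\ynp\|_{n,p}$: the two are independent, and the marginal law of $\Theta$ is exactly $\gamma_{n,p}$, which is precisely the claim \eqref{alphrep}. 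For the edge case $p=\infty$, $\mu_\infty^{\otimes n}$ is the uniform measure on $[-1,1]^n = \bninf$, whose density is a function of $\|y\|_{n,\infty}$ alone (it is constant on $\bninf$ and zero off it), so the analog of $(\star)$ for $p=\infty$ — proved by the same scaling argument on cones over subsets of $\mathbb{S}_{n,\infty}$ — yields the same conclusion.

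The main obstacle is verifying $(\star)$, in particular pinning down the constant $n\,\mathrm{vol}_n(\bnp)$; this hinges on correctly interpreting $\gamma_{n,p}$ via the cone construction so that the $n$-homogeneity $\mathrm{vol}_n(c\cdot A) = c^n\,\mathrm{vol}_n(A)$ produces the radial factor $r^{n-1}$ after differentiation in the radial variable. Once $(\star)$ is in hand, the rest is a direct consequence of the symmetry of $f_Y$ under the $\ell^p$ norm. Note that this argument does \emph{not} require identifying $\gamma_{n,p}$ with the surface measure $\sigma_{n,p}$ (which only happens for $p\in\{1,2,\infty\}$); $\gamma_{n,p}$ is defined intrinsically through Lebesgue measure of cones, and it is exactly this definition that makes the decomposition $(\star)$ work for every $p\in[1,\infty]$.
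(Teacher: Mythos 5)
Your proposal is correct. Note that the paper does not prove this lemma at all: it is quoted as a known result, with the proof deferred to the cited references (Rachev--R\"uschendorf and Schechtman--Zinn). Your argument is, in substance, the standard proof of that cited result: the $\ell^p$ polar decomposition $(\star)$ of Lebesgue measure, verified on annular cones via the $n$-homogeneity $\mathrm{vol}_n(cA)=c^n\mathrm{vol}_n(A)$ and the defining formula for $\gamma_{n,p}$, then extended by a $\pi$-system/monotone class argument (the sets $\{c\theta:c\in[a,b],\theta\in S\}$ do form a $\pi$-system generating the Borel sets of $\R^n\setminus\{0\}$, and one covers by $\{1/k\le\|y\|_{n,p}\le k\}$ to get $\sigma$-finiteness, so the uniqueness theorem applies). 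Applying $(\star)$ to the product density $c_p^n e^{-\|y\|_{n,p}^p/p}$, which depends on $y$ only through $\|y\|_{n,p}$, gives that the angular part $\ynp/\|\ynp\|_{n,p}$ has law $\gamma_{n,p}$ (the normalizing constant is fixed automatically since both sides are probability measures, so no separate computation of $n\,\mathrm{vol}_n(\bnp)\int_0^\infty r^{n-1}g(r)\,dr$ is needed); you in fact get independence of the radial and angular parts for free, though only the angular marginal is asserted in the lemma. Your treatment of $p=\infty$ via the uniform density on $[-1,1]^n=\mathbb{B}_{n,\infty}$ is also fine, and you are right that nothing here requires identifying $\gamma_{n,p}$ with $\sigma_{n,p}$.
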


Some extensions of Lemma \ref{lem-lprep} can also be found in \cite{barthe2005probabilistic}. The preceding representation is used in the proof of the following result, which generalizes a result from \cite[Theorem 6.6]{arous2001aging} beyond the special case where $p=2$ (for which the cone and surface measures coincide, and the surface measure can be expressed in terms of i.i.d.\ Gaussians).

\begin{proposition}\label{prop-weak}
Let $p\in[1,\infty]$ and assume $\anp \sim \gamma_{n,p}$. The sequence $(\lnp)_{n\in N}$ of \eqref{empir} satisfies an LDP in $\mathcal{P}(\R)$ equipped with the weak topology, with the good rate function $\mathbb{H}_p$ of \eqref{hpdefn}.
\end{proposition}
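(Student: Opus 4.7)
The plan is to use the contraction principle in the form of Theorem 4.2.1 of \cite{DemZeibook}, applied to the identity \eqref{contgp}. By Lemma \ref{lem-joint} and Lemma \ref{lem-joint2}, the pair $(\lnp^Y, m_p(\lnp^Y))$ satisfies an LDP in $\mathcal{P}(\R)\times\R_+$ (equipped with the product topology of weak convergence and the Euclidean topology) with the good rate function $J$ of \eqref{jointj2}; by Lemma \ref{lem-map}, the map $G_p:\mathcal{P}(\R)\times\R_+\to\mathcal{P}(\R)$ is continuous. Since the distributional identity $\lnp \eqdist G_p(\lnp^Y, m_p(\lnp^Y))$ holds for each $n$, the contraction principle yields an LDP for $(\lnp)_{n\in\N}$ in $\mathcal{P}(\R)$ (with the weak topology) with the good rate function
\begin{equation*}
\widetilde{\mathbb{H}}_p(\nu) \doteq \inf\{J(\mu,c): (\mu,c)\in \mathcal{P}(\R)\times\R_+,\, G_p(\mu,c)=\nu\}.
\end{equation*}
It therefore remains to verify that $\widetilde{\mathbb{H}}_p = \mathbb{H}_p$.

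To identify the preimages under $G_p$, fix $\nu\in\mathcal{P}(\R)$ and note that $G_p(\mu,c)=\nu$ holds with $c>0$ if and only if $\mu$ is the pushforward of $\nu$ under the dilation $T_c: y\mapsto c^{1/p} y$ (the case $c=0$ forces $\mu=\delta_0$, which satisfies $J(\delta_0,0)=+\infty$ and can be discarded). A straightforward change of variables shows $m_p(\mu)=c\, m_p(\nu)$, so the constraint $m_p(\mu)\le c$ appearing in \eqref{jointj2} reduces to $m_p(\nu)\le 1$. In particular, $\widetilde{\mathbb{H}}_p(\nu)=+\infty$ whenever $m_p(\nu)>1$, matching \eqref{hpdefn} on that set.

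On the set $\{m_p(\nu)\le 1\}$, the main step is to rewrite $H(\mu\|\mu_p)$ in terms of $\nu$. Using $d\mu_p(y)=Z_p^{-1}e^{-|y|^p/p}\,dy$ with $Z_p=2p^{1/p}\Gamma(1+1/p)$, substitution $y=c^{1/p}z$ in the definition of relative entropy, and the identity $\log(d\mu/d\text{Leb})(c^{1/p}z)=-\tfrac{1}{p}\log c +\log(d\nu/d\text{Leb})(z)$, yields
\begin{equation*}
H(\mu\|\mu_p) = H(\nu\|\mu_p) - \tfrac{1}{p}\log c + \tfrac{1}{p}(c-1)\,m_p(\nu),
\end{equation*}
valid when $\nu\ll\mu_p$ (and both sides equal $+\infty$ otherwise, since $T_c$ is a diffeomorphism). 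Combining this with $m_p(\mu)=c\,m_p(\nu)$ and the formula \eqref{jointj2} for $J$ gives
\begin{equation*}
J(\mu,c) = H(\nu\|\mu_p) + \tfrac{1}{p}\bigl(c-\log c - m_p(\nu)\bigr).
\end{equation*}
Since $c\mapsto c-\log c$ on $(0,\infty)$ attains its unique minimum value $1$ at $c=1$, minimizing over admissible $c$ yields $\widetilde{\mathbb{H}}_p(\nu)=H(\nu\|\mu_p)+\tfrac{1}{p}(1-m_p(\nu))=\mathbb{H}_p(\nu)$, as desired. The case $p=\infty$ is handled analogously, with the convention $1/\infty=0$ making the $c-\log c$ term disappear and the constraint $m_\infty(\nu)\le 1$ emerging from $\mu_\infty$ being supported on $[-1,1]$.

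The main technical point, and the only place requiring care, is the change-of-variables identity for $H(\mu\|\mu_p)$: one must justify it when $\nu$ is absolutely continuous with respect to Lebesgue measure and separately verify that both the left and right sides equal $+\infty$ when it is not, so that the equality $\widetilde{\mathbb{H}}_p=\mathbb{H}_p$ holds unconditionally. Goodness of $\mathbb{H}_p$ is automatic from the contraction principle since $J$ is good.
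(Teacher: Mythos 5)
Your proposal is correct and follows essentially the same route as the paper: contraction of the joint LDP from Lemmas \ref{lem-joint} and \ref{lem-joint2} through the continuous map $G_p$ of Lemma \ref{lem-map}, followed by identifying the preimages $\lambda=\nu(\cdot\times c^{-1/p})$ with $m_p(\lambda)=c\,m_p(\nu)$ and minimizing $c\mapsto c-\log c$ over $c>0$. The only cosmetic difference is that you carry out the scaling identity for $H(\cdot\|\mu_p)$ directly via Lebesgue densities, whereas the paper rewrites it as $H(\nu\|\mu_p(\cdot\times c^{1/p}))$ and expands the Radon--Nikodym derivative of the rescaled $\mu_p$; the resulting algebra is identical.
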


To prove Proposition \ref{prop-weak}, we appeal to the following representation of relative entropy. 

\begin{lemma}[Donsker-Varadhan; see, e.g., Lemma 1.4.3(a) of \cite{dupuis2011weak}] \label{lem-donvar}
Let $C_b(\mathcal{X})$ denote the space of bounded continuous functions from $\mathcal{X}$ into $\mathbb{R}$. Then, for all $\nu,\mu \in \mathcal{P}(\mathcal{X})$, we have
\begin{equation*}
  H(\nu \| \mu) = \sup_{f\in C_b(\R)} \left\{ \int_{\mathcal{X}} f d\nu - \log\int_{\mathcal{X}} e^{f} d\mu \right\}.
\end{equation*}

\end{lemma}

\begin{proof}[Proof of Proposition \ref{prop-weak}]
Let $\ynp \sim \mu_p^{\otimes n}$ be as in Lemma \ref{lem-lprep}, and let $\lnpy$ denote the empirical measure of $\ynp$,
\begin{equation*}
  \lnpy \doteq \frac{1}{n}\sum_{i=1}^n \delta_{\ynp_i}.
\end{equation*} 
Since $(\lnpy)_{n\in\N}$ satisfies an LDP in $\mathcal{P}(\R)$ by Sanov's theorem (see, e.g., \cite[Sect. 6.2]{DemZeibook}), $(\lnpy)_{n\in\N}$ is also exponentially tight (recall Definition \ref{def-etight}, and see e.g., \cite[Lemma 2.6]{lynch1987large}). Furthermore, due to the finite exponential moment $\E[e^{t \|Y^{(n,p)}\|_{n,p}^p}] < \infty$ for $|t| < 1/p$, for $m_p$ as in \eqref{mqdef}, the sequence $(m_p(\lnpy))_{n\in\N}$  satisfies an LDP in $\R$ by Cram\'er's theorem (see, e.g., \cite[Sect. 2.2.2]{DemZeibook}), and hence $(m_p(\lnpy))_{n\in\N}$ is also exponentially tight. As a consequence, the joint sequence  $(\lnpy, m_p(\lnpy))_{n\in \N}$ is also exponentially tight, with respect to the product topology on $\mathcal{P}(\R) \times \R_+$.

 We claim (and prove below) that the joint sequence $(\lnpy, m_p(\lnpy))_{n\in \N}$ satisfies an LDP in $\mathcal{P}(\R)\times\mathbb{R}_+$ with the convex good rate function $J$ defined as follows:  for $\lambda \in \mathcal{P}(\R)$ and $c \in \R_+$, let
\begin{equation}\label{jointj}
 J(\lambda,c) \doteq \sup_{f\in C_b(\R),\, t\in \R} \left\{ \int_\R f(y) \, \lambda(dy) + tc - \log \int_\R e^{f(y) + t|y|^p} \mu_p(dy)   \right\}
 \end{equation}
 where $C_b(\R)$ denotes the space of all bounded continuous functions from $\R$ to $\R$. Due to Cram\'er's theorem for general Polish spaces (see, e.g.,   \cite[Theorem 6.1.3]{DemZeibook}), the sequence $(\lnpy, m_p(\lnpy))_{n\in \N}$ satisfies a \emph{weak large deviation principle} in the space $\mathcal{P}(\R) \times \R_+$, with rate function $J$ of \eqref{jointj}; that is, the LDP upper bound in Definition \ref{def-ldp} holds only for compact sets (see \cite[pg.7]{DemZeibook} for a precise definition of a weak large deviation principle). The full LDP with good rate function $J$ then follows as a consequence of the weak LDP combined with the previously established exponential tightness (see, e.g., \cite[Lemma 1.2.18]{DemZeibook}).

We now derive an alternative form of the rate function $J$ defined in \eqref{jointj}. Note that for $t \ge \frac{1}{p}$, we have $\log \int_\R e^{f(y) + t|y|^p} \mu_p(dy) = +\infty$. For $t < 1/p$, define $\tilde{\mu}_{p,t} (dy) \doteq \frac{1}{Z_{p,t}} e^{t |y|^p} \mu_p(dy)$, where $Z_{p,t}\doteq \int_\R e^{t|y|^p}\mu_p(dy)$ is a normalizing constant so that $\tilde{\mu}_{p,t}$ is a probability measure. Note that $\tilde{\mu}_{p,t}$ is well defined for $t < \frac{1}{p}$, due to the definition of $\mu_p$ from \eqref{gengsn}.

 For $t< \frac{1}{p}$, apply the Donsker-Varadhan variational formula for relative entropy (as stated in Lemma \ref{lem-donvar}) to the bounded continuous function $f$ and the probability measure $\tilde{\mu}_{p,t}$. We then find that for $(\lambda,c)\in\mathcal{P}(\R) \times \R_+$,
\begin{align*}
  J(\lambda, c) &= \sup_{t < 1/p} \left\{ tc + H(\lambda \| \tilde{\mu}_{p,t}) - \log Z_{p,t}  \right\}\\
    &= H(\lambda \| \mu_p) + \sup_{t < 1/p} \left\{ tc + \int_\R \log\frac{d\mu_p}{d\tilde{\mu}_{p,t}} \lambda(dy) - \log Z_{p,t}  \right\}\\
    &= H(\lambda \| \mu_p) + \sup_{t < 1/p} \left\{ tc - \int_\R t|y|^p \lambda(dy) + \log Z_{p,t} - \log Z_{p,t}  \right\}\\
    &=  H(\lambda \| \mu_p ) + \sup_{ t < 1/p} \left\{ tc  - t\,m_p(\lambda) \right\} \\
  &= \left\{ \begin{array}{ll}
 H(\lambda \| \mu_p) + \tfrac{1}{p} (c - m_p(\lambda))  & \textnormal{ if } m_p(\lambda) \le c,\\
  +\infty & \textnormal{ else.} \end{array}\right.
\end{align*}
For $(\lambda,c)\in \mathcal{P}(\R)\times \R_+$, define $G_p(\lambda,c) \doteq \lambda( \,\cdot \, \times c^{1/p})$. An elementary consequence of Slutsky's theorem is that $G_p:\mathcal{P}(\R)\times\R_+ \ra \mathcal{P}(\R)$  is continuous (w.r.t.\ the weak topology on $\mathcal{P}(\R)$ and the induced product topology on $\mathcal{P}(\R)\times \R_+$). By \eqref{empir} and Lemma \ref{lem-lprep},  $ \lnp$ is equal in distribution to $G_p( \lnpy, m_p(\lnpy))$, so the contraction principle (see, e.g., \cite[Theorem 4.2.1]{DemZeibook}) yields an LDP for $(\lnp)_{n\in\N}$ with the good rate function
\begin{align*}
  \mathbb{J}_p(\nu) &\doteq \inf \left\{  J(\lambda, c) : \lambda \in \mathcal{P}(\mathbb{R}), c \in \mathbb{R}_+ , G_p(\lambda,c) = \nu  \right\}\\
 &= \left[ \begin{array}{ll}  \inf_{c\ge 0}\left\{ 
 H(\nu(\,\cdot\, \times c^{-1/p} )\, \| \, \mu_p) + \tfrac{1}{p} (c - c\,m_p(\nu)) \right\} & \textnormal{ if } m_p(\nu) \le 1,\\
  +\infty & \textnormal{ else.}
 \end{array}\right.
 \end{align*}
If $m_p(\nu)\le 1$, then
\begin{align*}
\mathbb{J}_p(\nu) 
  &= \inf_{c\ge 0} \left\{ H(\nu \| \mu_p(\cdot \times c^{1/p}))  +\tfrac{c}{p} (1- m_p(\nu)) \right\} \\
  &= H(\nu \| \mu_p) + \inf_{c\ge 0} \left\{   -\tfrac{1}{p}\log c - \tfrac{1-c}{p} m_p(\nu) + \tfrac{c}{p} (1- m_p(\nu)) \right\} = \mathbb{H}_p(\nu).
\end{align*}\end{proof}

\begin{remark}
Note that the above proof of Proposition \ref{prop-weak} relies on an LDP for the joint sequence $(\lnpy, m_p(\lnpy))_{n\in \N}$. An alternative approach to the one presented would be to appeal to results on the ``partial large deviation principle" established for self-normalized processes, as stated in \cite[Theorem 1.1]{dembo1998self}.	
\end{remark}

\begin{remark}
Note that Proposition \ref{prop-weak} cannot be obtained via a direct application of the contraction principle to the map $\nu\mapsto G_p(\nu,m_p(\nu))$. Indeed, there does not appear to be a standard topology on $\mathcal{P}(\R)$ such that both the sequence $(\lnpy)_{n\in\N}$ satisfies an LDP with good rate function \emph{and} the map $m_p(\cdot)$ is continuous with respect to that topology. In particular,  $m_p(\cdot)$ is continuous with respect to the Wasserstein-$r$ topology if and only if $r\ge p$; on the other hand, \cite{LeoNaj02} and \cite{wang2010sanov} show that the sequence $(\lnpy)_{n\in\N}$ satisfies an LDP with good rate function with respect to the Wasserstein-$r$ topology if and only if $r<p$.
 \end{remark}

\begin{lemma}\label{lem-compact}
For $p\in[1,\infty]$, let $K_p \doteq \{ \nu \in \mathcal{P}(\mathbb{R}) : m_p(\nu) \le 1 \}$. Then, for all $q <p$, the set $K_p\subset\mathcal{P}_q(\R)$ is compact in $\mathcal{P}_q(\R)$. In addition, $K_p$ is convex and non-empty.
\end{lemma}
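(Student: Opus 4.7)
The plan is to verify the three assertions separately; only the compactness in the $\mathcal{W}_q$ topology is substantive. Non-emptiness is trivial since $\delta_0\in K_p$. Convexity is immediate from the linearity of $\nu\mapsto m_p(\nu)$ in $\nu$ for $p<\infty$, and for $p=\infty$ it follows from the fact that the set of probability measures supported in $[-1,1]$ is convex, together with $m_\infty(\alpha\nu_1 + (1-\alpha)\nu_2) = \max(m_\infty(\nu_1), m_\infty(\nu_2)) \le 1$.

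For compactness, I would first observe that $K_p\subset \mathcal{P}_q(\R)$: indeed, by Jensen's inequality, $m_q(\nu) \le m_p(\nu)^{q/p} \le 1$ for every $\nu \in K_p$ when $p<\infty$, and the containment is trivial for $p=\infty$. I would then invoke the standard characterization of relative compactness in the $\mathcal{W}_q$ topology (see, e.g., \cite[Theorem~6.9]{villani2008optimal}): a set $K\subset \mathcal{P}_q(\R)$ is relatively $\mathcal{W}_q$-compact if and only if it is tight and has uniformly $q$-integrable tails, i.e.\
\begin{equation*}
  \lim_{R\to\infty}\, \sup_{\nu\in K} \int_{\{|x|>R\}} |x|^q \, \nu(dx) = 0.
\end{equation*}
For $K_p$ with $p<\infty$, both conditions follow at once from a Markov-type estimate: for any $\nu\in K_p$ and $R>0$, one has $\nu(\{|x|>R\}) \le R^{-p} m_p(\nu) \le R^{-p}$, giving tightness, and $\int_{\{|x|>R\}} |x|^q \,\nu(dx) \le R^{q-p} m_p(\nu) \le R^{q-p}$, which tends to $0$ as $R\to\infty$ uniformly over $\nu\in K_p$, since $q<p$. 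For $p=\infty$, both conditions are trivial because every $\nu\in K_\infty$ is supported in $[-1,1]$.

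Finally, closedness of $K_p$ in the $\mathcal{W}_q$ topology would follow from the lower semicontinuity of $\nu\mapsto m_p(\nu)$ with respect to the weak topology. For $p<\infty$, this is the Portmanteau theorem applied to the nonnegative continuous function $x\mapsto |x|^p$ (approximated from below by bounded continuous truncations). For $p=\infty$, it follows from the Portmanteau inequality for the open set $\{|x|>C\}$. Since $\mathcal{W}_q$-convergence implies weak convergence, if $\nu_n\in K_p$ and $\nu_n\to\nu$ in $\mathcal{W}_q$, then $m_p(\nu)\le \liminf_n m_p(\nu_n)\le 1$, so $\nu\in K_p$. The only step that deserves more than a one-line justification is the characterization of $\mathcal{W}_q$-relative compactness via tightness plus uniform $q$-integrability, which I would simply cite from \cite{villani2008optimal}; the remaining estimates are elementary Markov-style bounds that exploit $q<p$.
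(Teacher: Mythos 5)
Your proof is correct and follows essentially the same route as the paper: weak tightness and closedness via Chebyshev/Portmanteau, upgraded to $\mathcal{W}_q$-compactness by a uniform $q$-th-moment integrability criterion (you verify the tail condition directly with the Markov-type bound $\int_{\{|x|>R\}}|x|^q\,\nu(dx)\le R^{q-p}$ and cite \cite{villani2008optimal}, whereas the paper invokes the de la Vall\'ee-Poussin criterion with $g(x)=|x|^{p/q}$ and cites \cite{ambrosio2008gradient}). Your explicit treatment of the $p=\infty$ case and of $\mathcal{W}_q$-closedness is consistent with, and slightly more detailed than, the paper's argument.
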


\begin{proof}
The convexity and non-emptiness of $K_p$ are elementary. As for compactness, we first prove that $K_p$ is weakly compact in $\mathcal{P}(\R)$. For $\nu \in K_p$, for all $M > 0$, Chebyshev's inequality yields  $\nu([-M,M]^c) \le m_p(\nu)/ M^p \le 1/{M^p}$, so $K_p$ is tight, and by Prokhorov's theorem, precompact. Note that $K_p$ is weakly closed in $\mathcal{P}(\R)$, since it is a level set of the lower semicontinuous map $m_p$. Therefore, $K_p$ is weakly compact.

To verify Wasserstein compactness, it suffices to show that the set of probability measures in $K_p$ have uniformly integrable $q$-th moments \cite[Proposition 7.1.5]{ambrosio2008gradient}. This latter condition follows from the de la Vall\'ee-Poussin criterion (see, e.g., \cite[II.22]{dellacherie2011probabilities}) since for $g(x) \doteq |x|^{p/q}$ (which satisfies the superlinear growth condition $\lim_{|t|\ra\infty}g(t)/t =\infty$), we have
\begin{align*}
\sup_{\nu \in K_p} \int_\mathbb{R} g(|x|^q) \nu(dx) &= \sup_{\nu \in K_p} m_p(\nu) \le 1 <\infty.
\end{align*}
\end{proof}

\label{pf-cone}
\begin{proof}[Proof of Proposition \ref{prop-sanovcone}]
Fix $p\in[1,\infty]$ and $q<p$. In view of the fact that $(\lnp)_{n\in\N}$ satisfies an LDP in $\mathcal{P}(\R)$ with respect to the weak topology due to Proposition \ref{prop-weak}, in order to establish the LDP for $(\lnp)_{n\in\N}$ in $\mathcal{P}_q(\R)$ with respect to the $q$-Wasserstein topology, it suffices to show exponential tightness of $(\lnp)_{n\in\N}$ in $\mathcal{P}_q(\R)$ (see \cite[Corollary 4.2.6]{DemZeibook}).  Let $K_p$ be the compact set  defined in Lemma \ref{lem-compact}. Note that $m_p(\lnp) = 1$ a.s., so $\P(\lnp \in K_p^c) = 0$ and hence $\log \P(\lnp \in K_p^c)  = -\infty$ for all $n\in\N$, implying exponential tightness of  $(\lnp)_{n\in\mathbb{N}}$.

The strict convexity of $\mathbb{H}_p$ follows from the strict convexity of the relative entropy $H(\cdot \| \mu_p)$ and the linearity of the $p$-th moment $m_p$.
\end{proof}

\subsection{LDP under the surface measure}\label{sec-surface}

In this section, we establish an LDP for $(\lnp)_{n\in\N}$ assuming $\anp$ is distributed according to the surface measure on $\snp$, which is defined below.

\begin{definition} \label{def-surf}
Let $\text{area}_{n,p}(\cdot)$ denote the (unnormalized) surface area measure on $\snp$, and let $\sigma_{n,p}$ denote the (normalized) \emph{surface measure} on $\snp$, 
\begin{equation*}
  \sigma_{n,p}(S) \doteq \frac{\text{area}_{n,p}(S)}{\text{area}_{n,p}(\snp)}, 
\end{equation*}
for $S$ a Borel measurable subset of $\snp$.
\end{definition}


\begin{proposition}[LDP under the surface measure]\label{prop-sanovsurface}
Let $p\in[1,\infty]$ and assume $\anp \sim \sigma_{n,p}$. For $q<p$, the sequence of empirical measures $(\lnp)_{n\in N}$ of \eqref{empir} satisfies an LDP in $\mathcal{P}_q(\R)$  with the strictly convex good rate function $\mathbb{H}_p$ of \eqref{hpdefn}.
\end{proposition}

The proof of Proposition \ref{prop-sanovsurface} is deferred to the
end of this subsection, after first establishing some preliminary lemmas. 

Note that the cone measure coincides with the surface measure on
$\snp$ if and only if $p=1,2$, or $\infty$. See
\cite[\S3]{rachev1991approximate}  and \cite[\S3]{naor2007surface} for
more extensive discussions. More generally, we have the following
relationship between the cone and surface measures.  

\begin{lemma}[{\cite[Lemma 2]{naor2003projecting}}]\label{lem-surfconedens}
Let $p\in[1,\infty)$. Then, 
\begin{equation*}
  \frac{d\sigma_{n,p}}{d\gamma_{n,p}}(x) = C_{n,p} \left(\sum_{i=1}^n |x_i|^{2p-2}\right)^{1/2}, \quad x\in \snp,
\end{equation*}
where $C_{n,p}$ is a normalizing constant.
\end{lemma}

Next, we state a general result about LDPs for two sequences of measures that satisfy a particular absolute continuity relation.

\begin{lemma}\label{lem-equivldp}
Let $\mathcal{X}$ be a Polish space, and for $n\in \N$, let $\psi_n:\mathcal{X}\rightarrow\R$. Suppose that there exists a sequence of finite constants $(M_n)_{n\in \N}$ satisfying $  \lim_{n\ra\infty} \tfrac{M_n}{n} = 0$, such that,
\begin{equation}\label{psibd}
  |\psi_n(\lambda)| \le M_n, \quad \mu_n\textnormal{-a.e. } \lambda\in \mathcal{X}, \quad \forall n\in \N.
\end{equation}
Let $(\mu_n)_{n\in\N}\subset\mathcal{P}(\mathcal{X})$ be a sequence of probability measures that satisfies an LDP with a good rate function $\mathbb{I}(\cdot)$. Define 
\begin{equation}\label{nundef}
  \nu_n(d\lambda) \doteq \tfrac{1}{\int_{\mathcal{X}} e^{\psi_n(\lambda)} \mu_n(d\lambda)} e^{\psi_n(\lambda)} \mu_n(d\lambda).
\end{equation}
Then, $(\nu_n)_{n\in \N}$ satisfies an LDP with the same good rate function $\mathbb{I}(\cdot)$.
\end{lemma}
\begin{proof}
For $\phi:\mathcal{X}\rightarrow \R$ continuous and bounded, define
\begin{equation*}
  \Lambda_\phi \doteq \lim_{n\rightarrow\infty} \frac{1}{n}\log\int_{\mathcal{X}} e^{n\,\phi(\lambda)} \mu_n(d\lambda),
\end{equation*}
where the limit exists due to  the LDP for $(\mu_n)_{n\in \N}$ and Varadhan's Lemma (see, e.g., \cite[Theorem 4.3.1]{DemZeibook}). The definition of $\nu_n$ in \eqref{nundef}, the bound on $\psi_n$ for all $n\in\N$ given in  \eqref{psibd}, and the assumption that $M_n/n \ra 0$ imply that 
\begin{align*}
\liminf_{n\ra\infty}  \frac{1}{n}\log\int_{\mathcal{X}} e^{n\,\phi(\lambda)} \nu_n(d\lambda) &\ge   \liminf_{n\rightarrow\infty} \frac{1}{n}\log \int_{\mathcal{X}} e^{n\,\phi(\lambda) -M_n} \mu_n(d\lambda)\\
 &= \Lambda_\phi -\lim_{n\rightarrow\infty} \frac{M_n}{n} = \Lambda_\phi,\\
\limsup_{n\ra\infty}  \frac{1}{n}\log\int_{\mathcal{X}} e^{n\,\phi(\lambda)} \nu_n(d\lambda)  &\le  \limsup_{n\rightarrow\infty} \frac{1}{n}\log \int_{\mathcal{X}} e^{n\,\phi(\lambda) +M_n} \mu_n(d\lambda)\\
 &= \Lambda_\phi +\lim_{n\rightarrow\infty} \frac{M_n}{n} = \Lambda_\phi.
\end{align*}
Thus, we have shown that
\begin{equation*}
  \Lambda_\phi = \tilde\Lambda_\phi \doteq \lim_{n\ra\infty} \frac{1}{n}\log\int_{\mathcal{X}} e^{n\,\phi(\lambda)} \nu_n(d\lambda).
\end{equation*}

Note that $(\mu_n)_{n\in\N}$ is exponentially tight since it satisfies an LDP with good rate function and $\mathcal{X}$ is Polish \cite[Lemma 2.6]{lynch1987large}. We claim that $(\nu_n)_{n\in\N}$ is also exponentially tight. Let $L < \infty$, and let $K_L\subset\mathcal{X}$ be a compact set such that 
\begin{equation}
\limsup_{n\ra\infty} \frac{1}{n}\log\mu_n(K_L^c) < -L. \label{exptight}
\end{equation}
Then, given $L < \infty$, note that
\begin{align*}
\log\nu_n( K_L^c) &=  \log \int_{ K_L^c} e^{\psi_n(\lambda)} \mu_n(d\lambda) - \log \int_{\mathcal{X}} e^{\psi_n(\lambda)} \mu_n(d\lambda)\\
   &\le M_n + \log \mu_n(K_L^c) + M_n - \log \mu_n(\mathcal{X}).
\end{align*}
Taking the limit supremum as $n\ra\infty$, using \eqref{exptight}, and applying the fact that $\frac{M_n}{n} \ra 0$, we find that
\begin{equation*}
\limsup_{n\ra\infty} \frac{1}{n} \log \nu_n({K}_L^c) \le \limsup_{n\ra \infty} \frac{1}{n} \log\mu_n(K_L^c) < -L.
\end{equation*}
Since $\Lambda_\phi = \tilde\Lambda_\phi$, and both sequences $(\mu_n)_{n\in \N}$ and $(\nu_n)_{n\in \N}$ are exponentially tight, Bryc's inverse lemma (see, e.g.,  \cite[Theorem 4.4.2]{DemZeibook}) implies that the two sequences satisfy the same LDP with the good rate function $\mathbb{I}(\lambda) = \sup_{\phi\in C_b(\mathcal{X})}\{\phi(\lambda) - \Lambda_\phi\}$.
\end{proof}

We apply the preceding lemma to the absolute continuity relation of Lemma \ref{lem-surfconedens} to prove the LDP under the surface measure.

\label{pf-surf}
\begin{proof}[Proof of Proposition \ref{prop-sanovsurface}]
For $p=1,2,\infty$, we have $\sigma_{n,p} = \gamma_{n,p}$, and so the result trivially follows from Proposition \ref{prop-sanovcone}. Therefore, we restrict to $p \in (1,2)$ or $p\in(2,\infty)$. In this setting, we apply Lemma \ref{lem-equivldp} to the following: let $\mathcal{L}_n:\R^n\ra \mathcal{P}(\R)$ be the map that takes a vector $a\in\R^n$ to the measure $\frac{1}{n}\sum_{i=1}^n\delta_{a_i}\in \mathcal{P}(\R)$, and set
\begin{itemize}
\item $\mathcal{X} = \mathcal{P}_q(\R)$ for some $q< p$,
\item $\mu_n=\gamma_{n,p} \circ \mathcal{L}_n^{-1}$ and $\nu_n = \sigma_{n,p}\circ \mathcal{L}_n^{-1}$,
\item $\psi_n = \psi = \tfrac{1}{2}\log m_{2p-2}$,
\item $M_n \doteq |\frac{1}{2} - \frac{1}{p}| \log n$.
\end{itemize}
The choice of $(M_n)_{n\in\N}$ will become clear with the proof. We show that our setup satisfies the hypotheses of Lemma \ref{lem-equivldp}. For $n\in \N$, let 
\begin{equation*}
  A_n \doteq \left\{\frac{1}{n}\sum_{i=1}^n \delta_{a_i} : a\in \R^n\right\} \subset \mathcal{P}(\R).
\end{equation*}
Note that for all $n\in \N$, $\mu_n(A_n) = 1$. For $\lambda\in A_n$, using Lemma \ref{lem-surfconedens}, we can write the Radon-Nikodym derivative of $\nu_n$ with respect to $\mu_n$ (defined on a set of $\mu_n$ measure 1), as 
\begin{equation*}
  \frac{d\nu_n}{d\mu_n}(\lambda) = \frac{d\sigma_{n,p}}{d\gamma_{n,p}}(\mathcal{L}_n^{-1} \lambda) = C_{n,p} \left(n\, m_{2p-2}(\lambda)\right)^{1/2} = n^{1/2}C_{n,p}\, e^{\psi(\lambda)}.
\end{equation*}
We know from Proposition \ref{prop-sanovcone} that $(\mu_n)_{n\in\N}$ satisfies an LDP in $\mathcal{P}_q(\R)$ (for $q<p$), with good rate function $\mathbb{H}_p$. 

As for the boundedness properties of $\psi_n= \psi =  \frac{1}{2} \log m_{2p-2}$ stipulated in Lemma \ref{lem-equivldp}, first note that due to H\"older's inequality, for any $0 < r < s < \infty$ and $\lambda \in \mathcal{P}(\R)$, we have  $m_r(\lambda) \le [m_s(\lambda)]^{r/s}$. Now, fix $1 < p < 2$. Then $0 < 2p - 2 < p$, and so applying the preceding  inequality with $r = 2p - 2$ and $s = p$, we see that for $\lambda \in A_n$,
\begin{equation*}
  \mu_n( m_{2p-2}(\lambda) > 1) \le \mu_n( m_p(\lambda)^{2-(2/p)} > 1)  = \P\left(  \| \anp\|_{n,p} > 1 \right)  = 0,
\end{equation*}
where the last equality follows since $X^{(n,p)} \in \snp$, $\P$-a.s. On the other hand, to lower bound $m_{2p-2}(\lambda)$, recall that for $0 < r < s < \infty$ and $a\in \R^n$, we have $\|a\|_{n,s} \le \|a\|_{n,r}$. Applying this inequality with $r=2p-2$ and $s=p$,  and recalling $\lnp$ from \eqref{empir}, we have $\P$-a.s.,
\begin{equation*}
 m_{2p-2}(\lnp)  \,=\, n^{1-(2/p)} \|\anp\|_{n,2p-2}^{2p-2}    \, \ge \, n^{1-(2/p)} \|\anp\|_{n,p}^{2p-2}   \,=\, n^{1-(2/p)},
\end{equation*}
where the last equality once again uses the fact that $\P$-a.s. (under the cone measure), $X^{(n,p)}$ lies on the unit $\ell^p$ sphere $\snp$. In a similar fashion, for $2 < p < \infty$, we can show $\mu_n( m_{2p-2}(\lambda) < 1)=0$ for $\lambda \in A_n$, and $ m_{2p-2}(\lnp) \le n^{1-(2/p)}$, $\P$-a.s. 

Since $\mu_n(A_n) =1$, we have shown that for $\mu_n$-a.e.\ $\lambda \in \mathcal{P}(\R)$,
\begin{equation*}
  |\psi(\lambda)| = \left|\tfrac{1}{2}\log m_{2p-2}(\lambda)\right| \le M_n,
\end{equation*}
where $M_n \doteq \left|\tfrac{1}{2} - \tfrac{1}{p}\right| \log n$. Since $M_n/n\ra 0$, we can apply Lemma \ref{lem-equivldp}, which shows that the sequence of empirical measures $(\lnp)_{n\in\N}$ under the surface measure $\sigma_{n,p}$ satisfies an LDP with the same good rate function as under the cone measure $\gamma_{n,p}$.
\end{proof}


\section{Gibbs conditioning} \label{sec-gibbscond}

In this section, we use the LDPs of Propositions \ref{prop-sanovcone} and \ref{prop-sanovsurface} to prove our main result Proposition \ref{prop-clln}, which states a conditional limit law: that is, the asymptotically most likely distribution of a sequence of random variables, conditional on a large deviation. In the novel setting of $\ell^p$ spheres, we are able to lend  a geometric meaning to this conditional limit law.

 To state our result, we first introduce some additional notation. Let $h(\cdot)$ denote the differential entropy of a probability measure $\nu\in\mathcal{P}(\R)$,
\begin{equation}
h(\nu) \doteq \left\{ \begin{array}{ll}
 -\displaystyle\int_\R \tfrac{d\nu}{dx} \log \left(\tfrac{d\nu}{dx}\right)\, dx & \text{ if $\nu \ll \textnormal{Lebesgue measure on } \R$,}\\
 -\infty & \text{ else.}
 \end{array}\right.
\end{equation}
Differential entropy arises naturally in the analysis of the rate function $\mathbb{H}_p$, since for measures $\nu\in\mathcal{P}(\R)$ that are absolutely continuous with respect to Lebesgue measure and satisfy $m_p(\nu)\le 1$, we can rewrite the function $\mathbb{H}_p$ defined in \eqref{hpdefn} as:
\begin{equation}
\mathbb{H}_p(\nu) = -h(\nu)  + \log(2p^{1/p}\Gamma(1+\tfrac{1}{p})) + \tfrac{1}{p}. \label{hentrep}
\end{equation}
We obtain the following result as a consequence of the LDP of Theorem \ref{prop-sanovcone}, which involves a constrained maximum entropy problem. In particular, we crucially use the fact that the LDP for $(\lnp)_{n\in\N}$ holds with respect to the Wasserstein topology, which is stronger than the usual weak topology.

\begin{proposition}\label{prop-clln}
Fix $p\in[1,\infty]$ and suppose that either 
$X^{(n,p)}\sim \sigma_{n,p}$ or $X^{(n,p)}\sim \gamma_{n,p}$. Fix a closed interval $C=[\alpha,\beta]\subset \R$, and for $\epsilon > 0$, let $C_\epsilon\doteq [\alpha-\epsilon,\beta+\epsilon]$.  Then,  for $q < p$, the optimizing measure
\begin{equation}\label{nustar}
  \nu_* \doteq \arg \max \{ h(\nu) : m_p(\nu) \le 1, m_q(\nu) \in C \}
\end{equation}
is well defined (i.e., exists and is unique), and 
\begin{equation}\label{clln1}
  \lim_{\epsilon \ra 0}\, \lim_{n\ra \infty} \P(\lnp \in \cdot \,\, |\, m_q(\lnp) \in C_\epsilon) = \delta_{\nu_*}.
\end{equation}
Moreover, for $k\in \N$,
\begin{equation}\label{clln2}
  \P\left(\left. n^{1/p}(X_1^{(n,p)},\dots,X_k^{(n,p)}) \in \cdot \right| \,\,\tfrac{1}{n} \|n^{1/p}X^{(n,p)}\|_{n,q}^q \in C_\epsilon\right) \Rightarrow  \nu_*^{\otimes k},
\end{equation}
as $n\ra\infty$ followed by $\epsilon \ra 0$.
\end{proposition}

The proof of Proposition \ref{prop-clln} relies crucially on the so-called ``Gibbs conditioning principle" (see, e.g., \cite{csiszar1984sanov, dembo1996refinements, stroock1991microcanonical}), stated precisely below.

\begin{proposition}[{Gibbs conditioning principle, \cite[Theorem 7.1]{leonard2010entropic}}]\label{th-gibbs}
Let $\mathcal{X}$ be a topological space, and let $(\xi_n)_{n\in \N}$ be a sequence of $\mathcal{X}$-valued random variables that satisfies an LDP with good rate function $\mathbb{I}$. In addition, let $F\subset\mathcal{X}$  be a subset such that
\begin{enumerate}
\item $\mathbb{I}(F)\doteq \inf_{x\in F} \mathbb{I}(x) < \infty$;
\item $F$ is closed;
\item $F = \bigcap_{\epsilon > 0} F_\epsilon$ for a family of sets $(F_\epsilon)_{\epsilon > 0}$ such that $F_\epsilon \subset\mathcal{X}$ for all $\epsilon > 0$ and $\P(\xi_n \in F_\epsilon) > 0$ for all $\epsilon > 0$ and $n\in\N$;
\item $F \subset (F_\epsilon)^\circ$ for all $\epsilon > 0$.
\end{enumerate}
Let $\mathcal{M}_F$ be the set of $x\in F$ which minimize $\mathbb{I}$. That is,
\begin{equation*}
  \mathcal{M}_F \doteq \{x\in F : \mathbb{I}(x) = \mathbb{I}(F)\}.
\end{equation*}
Then, for all open $G\subset\mathcal{X}$ such that $\mathcal{M}_F\subset G$, we have
\begin{equation}\label{res1gc}
  \limsup_{\epsilon\ra 0}\limsup_{n\ra\infty}\frac{1}{n} \log\P(\xi_n\not\in G\,|\, \xi_n \in F_\epsilon) < 0.
\end{equation}
As a consequence, if $\mathcal{M}_F=\{\bar x\}$ is a singleton, then
\begin{equation}\label{res2gc}
  \lim_{\epsilon\ra 0} \lim_{n\ra\infty} \P(\xi_n \in \cdot | \xi_n \in F_\epsilon) = \delta_{\bar x} .
\end{equation}
\end{proposition}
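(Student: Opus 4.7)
The plan is to write
\[
  \P(\xi_n\notin G\mid \xi_n\in F_\epsilon)=\frac{\P(\xi_n\in F_\epsilon\cap G^c)}{\P(\xi_n\in F_\epsilon)}
\]
and control numerator and denominator separately via the LDP for $(\xi_n)_{n\in\N}$. Since $G$ is open, $G^c$ is closed, so $F_\epsilon\cap G^c\subset \overline{F_\epsilon}\cap G^c$ with the right-hand side closed, and the LDP upper bound yields
\[
  \limsup_{n\to\infty}\tfrac{1}{n}\log\P(\xi_n\in F_\epsilon\cap G^c)\le -\inf_{x\in\overline{F_\epsilon}\cap G^c}\mathbb{I}(x).
\]
For the denominator, hypothesis (4) gives $F\subset(F_\epsilon)^\circ$, so the LDP lower bound applied to the open set $(F_\epsilon)^\circ$ gives
\[
  \liminf_{n\to\infty}\tfrac{1}{n}\log\P(\xi_n\in F_\epsilon)\ge -\inf_{x\in (F_\epsilon)^\circ}\mathbb{I}(x)\ge -\mathbb{I}(F).
\]
Combining the two bounds yields
\[
  \limsup_{n\to\infty}\tfrac{1}{n}\log\P(\xi_n\notin G\mid \xi_n\in F_\epsilon)\le \mathbb{I}(F)-\inf_{x\in\overline{F_\epsilon}\cap G^c}\mathbb{I}(x).
\]

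The remaining task, which I expect to be the main obstacle, is to show that
\[
  \liminf_{\epsilon\downarrow 0}\inf_{x\in\overline{F_\epsilon}\cap G^c}\mathbb{I}(x)>\mathbb{I}(F),
\]
from which (\ref{res1gc}) will follow. The approach will be by contradiction: if the left side were at most $\mathbb{I}(F)$, one could extract a sequence $\epsilon_k\downarrow 0$ together with $x_k\in\overline{F_{\epsilon_k}}\cap G^c$ whose rate function values are uniformly bounded by, say, $\mathbb{I}(F)+1$. Goodness of $\mathbb{I}$ then confines $(x_k)$ to a compact level set and furnishes a subsequential limit $x_*$. The hypothesis $F=\bigcap_{\epsilon>0}F_\epsilon$, together with closedness of $F$ (and, if needed, passing to the decreasing family $\bigcap_{\epsilon'\ge\epsilon}F_{\epsilon'}$ to ensure genuine nesting), will force $x_*\in F$; meanwhile, closedness of $G^c$ gives $x_*\in G^c$, so $x_*\in F\setminus G$. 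Because $\mathcal{M}_F\subset G$, this places $x_*$ outside $\mathcal{M}_F$, so $\mathbb{I}(x_*)>\mathbb{I}(F)$ strictly; lower semicontinuity then gives $\liminf_k \mathbb{I}(x_k)\ge \mathbb{I}(x_*)>\mathbb{I}(F)$, the desired contradiction.

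The singleton case \eqref{res2gc} then follows easily. Assuming $\mathcal{M}_F=\{\bar x\}$, to show that $\P(\xi_n\in\,\cdot\,\mid \xi_n\in F_\epsilon)$ converges weakly to $\delta_{\bar x}$ in the iterated limit, it suffices to verify that for every open neighborhood $G\ni\bar x$, the conditional probability of $G^c$ tends to zero as $n\to\infty$ and then $\epsilon\downarrow 0$. Any such $G$ satisfies $\mathcal{M}_F\subset G$, so \eqref{res1gc} applies and gives the required exponential decay, which is more than enough for convergence to zero in the iterated limit.
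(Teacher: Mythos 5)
Your proposal is correct and follows essentially the same route as the paper: the same decomposition of the conditional probability, the LDP upper bound over the closed set $\overline{F_\epsilon}\cap G^c$, and the lower bound over $(F_\epsilon)^\circ$ together with $F\subset(F_\epsilon)^\circ$ to control the denominator by $\mathbb{I}(F)$. The only difference is that where the paper invokes Lemma 4.1.6 of Dembo--Zeitouni to conclude $\lim_{\epsilon\to 0}\inf_{\overline{F_\epsilon}\cap G^c}\mathbb{I}=\inf_{F\cap G^c}\mathbb{I}$ and then uses $\mathcal{M}_F\subset G$ (with goodness of $\mathbb{I}$) for strict positivity, you re-derive that limit directly via compact level sets and lower semicontinuity, which is precisely the content of the cited lemma.
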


\begin{proof}[Proof of Proposition \ref{prop-clln}] 
Fix $p\in[1,\infty]$ and $q < p$, and recall that due to Propositions \ref{prop-sanovcone} and \ref{prop-sanovsurface}, the sequence $(\lnp)_{n\in\N}$ satisfies an LDP in $\mathcal{P}_q(\R)$ (equipped with the $q$-Wasserstein topology) with strictly convex good rate function $\mathbb{H}_p$.

We first  show that Proposition \ref{th-gibbs} applies in the setting of Proposition \ref{prop-clln}. Fix $C=[\alpha,\beta]\subset\R$ and for $\epsilon > 0$, let $C_\epsilon = [\alpha-\epsilon,\beta+\epsilon]$. The set $F_\epsilon \doteq \{\nu\in\mathcal{P}_q(\R) : m_q(\nu) \in C_\epsilon\}$ is closed due to the continuity of $m_q$ of \eqref{mqdef} with respect to the $q$-Wasserstein topology, and hence, as an intersection of closed sets, the set $ F \doteq \bigcap_{\epsilon > 0} F_\epsilon  = \{ \nu \in {\mathcal P}(\mathbb{R}): m_q (\nu) \in C \}$ is also closed. Moreover, for $\epsilon > 0$, we also have
\begin{equation*}
  F = m_q^{-1}(C) \subset m_q^{-1}(C_\epsilon^\circ) \subset [m_q^{-1}(C_\epsilon)]^\circ = (F_\epsilon)^\circ,
\end{equation*}
where the second inclusion again uses the continuity of $m_q$. Next, let us show that $\mathcal{M}_F$ is a singleton  (i.e., $\nu_*$ of \eqref{nustar} is well defined). Recall that
\begin{equation*}
    \mathcal{M}_F \doteq \left\{\nu \in F : \mathbb{H}_p(\nu) = \min_{\nu \in F} \mathbb{H}_p(\nu)\right\}.
\end{equation*}
Note that $F$ is closed and convex, since it is the preimage of a closed, convex set $C$ under a continuous linear map $m_q$. Because $\mathbb{H}_p$ is lower semicontinuous and has compact level sets, it attains its minimum within $F$. That the minimum is attained at a unique $\nu_* \in F$ follows from the strict convexity of $\mathbb{H}_p$. The ``maximum entropy"  representation for $\nu_*$ given in \eqref{nustar} follows from the expression for  $\mathbb{H}_p$ given in \eqref{hentrep}. Thus, the limit \eqref{clln1} follows from the limit \eqref{res2gc} of Proposition \ref{th-gibbs}, applied to $\xi_n = \mathcal{L}_{n,p}$.

As for the second result \eqref{clln2}, this follows from an elementary ``propagation of chaos" result \cite[Proposition 2.2]{sznitman1991topics} that, under the assumption of exchangeability, establishes the equivalence of statements like \eqref{clln1} (regarding convergence in probability of the empirical measure to a deterministic measure) and statements like \eqref{clln2} (regarding joint convergence in distribution of any fixed $k$ of the underlying random variables). \end{proof}

As a prerequisite for the proof of Theorem \ref{cor-asymp}, recall the following basic information-theoretic fact.

\begin{lemma}\label{lem-maxent}
Fix $r_i:\R\ra\R$, $\alpha_i\in \R$, for $i=1,\dots,m$, and  $s_j:\R\ra\R$, $\beta_j\in\R$, for $j=1,\dots, n$. Consider the following maximization problem:
\begin{equation}\label{maxent}
\begin{aligned}
& \underset{\nu\in\mathcal{P}(\R)}{\textnormal{maximize}}
& & h(\nu) \\
& \textnormal{subject to}
& & \int_\R r_i(x)\nu(dx) = \alpha_i \textnormal{ for } i=1,\dots, m,\\
& & & \int_\R s_j(x)\nu(dx) \le  \beta_j \textnormal{ for } j=1,\dots, n .
\end{aligned}
\end{equation}
Then, a probability measure $\nu_*\in\mathcal{P}(\R)$ attains the maximum in \eqref{maxent} if and only if it is of the following form:
\begin{equation*}
  \nu_*(dx) = \exp\left(-1-\kappa_0^* - \sum_{i=1}^m\lambda_i^*r_i(x) - \sum_{j=1}^n \mu_j^*s_j(x)  \right) \,dx,
\end{equation*}
with non-negative constants $\kappa_0^*$, $(\lambda_i^*)_{i=1}^m$, and $(\mu_j^*)_{j=1}^n$ chosen such that $\nu_*$ lies in $\mathcal{P}(\R)$ and satisfies the constraints in \eqref{maxent}. Moreover, if $\nu_*$ attains the maximum in \eqref{maxent}, then 
\begin{equation}\label{compslack}
\left(\int_\R s_j(x) \nu_*(dx) - \beta_j\right) \cdot \mu_j^* = 0
\end{equation}
for all $j=1,\dots, n$.
\end{lemma}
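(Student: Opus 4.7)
The plan is to prove this standard maximum entropy characterization by two complementary arguments: a short direct argument based on Gibbs' (relative entropy) inequality for the sufficiency direction, and a Lagrangian/KKT argument for the necessity direction and the complementary slackness claim. The underlying reason that this result holds is that the objective $-h$ is strictly convex (in the sense that $h$ is strictly concave on the space of densities with full support) and the constraints are linear, so the maximization is a convex program for which KKT conditions are both necessary and sufficient.

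For the sufficiency direction, suppose $\nu_*$ has the stated exponential form, satisfies the constraints, and satisfies the complementary slackness condition \eqref{compslack} (which will be derived separately). For any feasible $\nu$ that is absolutely continuous with respect to Lebesgue measure (otherwise $h(\nu)=-\infty$ and there is nothing to prove), I would apply the non-negativity of relative entropy $H(\nu\|\nu_*)\ge 0$. Expanding this and using the explicit form of $d\nu_*/dx$ gives
\[
h(\nu) \le -\int_\R \log\tfrac{d\nu_*}{dx}(y)\,\nu(dy) = 1 + \kappa_0^* + \sum_{i=1}^m \lambda_i^* \alpha_i + \sum_{j=1}^n \mu_j^* \int_\R s_j(y)\,\nu(dy).
\]
The equality constraints $\int r_i\,d\nu = \alpha_i$ are used in the middle equality. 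Since $\mu_j^*\ge 0$ and $\int s_j\,d\nu \le \beta_j$, the right-hand side is bounded above by $1+\kappa_0^* + \sum_i \lambda_i^*\alpha_i + \sum_j \mu_j^*\beta_j$. The complementary slackness condition applied to $\nu_*$ then shows that this upper bound is attained by $\nu_*$ itself, so $h(\nu)\le h(\nu_*)$.

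For the necessity direction, I would work with the Lagrangian
\[
L(\nu,\kappa_0,\vec\lambda,\vec\mu) \doteq h(\nu) - \kappa_0\left(\textstyle\int d\nu - 1\right) - \sum_{i=1}^m \lambda_i\left(\textstyle\int r_i\,d\nu - \alpha_i\right) - \sum_{j=1}^n \mu_j\left(\textstyle\int s_j\,d\nu - \beta_j\right),
\]
and compute its first variation in the density of $\nu$. Setting this variation to zero pointwise yields the stationarity equation $-\log(d\nu_*/dx)(y) - 1 - \kappa_0^* - \sum_i \lambda_i^* r_i(y) - \sum_j \mu_j^* s_j(y) = 0$, which rearranges to the advertised exponential form. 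The non-negativity $\mu_j^*\ge 0$ is dual feasibility, and complementary slackness \eqref{compslack} is the standard KKT condition; it can also be read off from the sufficiency inequality by setting $\nu=\nu_*$ and noting that equality forces $\mu_j^*\int s_j\,d\nu_* = \mu_j^*\beta_j$ for each $j$. Uniqueness of the maximizer follows from strict concavity of $h$ on densities.

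The main obstacle is justifying the variational/KKT step rigorously in the infinite-dimensional setting $\mathcal{P}(\R)$, in particular verifying a constraint qualification (e.g.\ a Slater-type condition ensuring the existence of a feasible measure strictly satisfying all inequality constraints) so that the multipliers exist and are non-negative, and ensuring that admissible density perturbations of $\nu_*$ preserve both positivity and the equality/inequality constraints. In applications (such as the one in the proof of Corollary \ref{cor-asymp}), the concrete form of the functionals $r_i$ and $s_j$ makes this verification straightforward, so I would organize the write-up so that the Gibbs-inequality argument is the main content and the KKT/existence subtleties are handled by appealing to standard references on convex duality for maximum entropy problems.
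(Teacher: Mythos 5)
The paper does not actually prove Lemma \ref{lem-maxent}: the authors declare it standard and delegate it to \cite[\S 12.1]{cover2001info} (equality constraints, handled by exactly the relative-entropy inequality you use) and to \cite[Ex.~5.3, \S 5.5.2]{boyd2004convex} (inequality constraints and complementary slackness, via KKT). Your proposal therefore reconstructs the intended standard argument rather than departing from it, and your sufficiency half is sound as you state it: if $\nu_*$ has the exponential form, is feasible, has $\mu_j^*\ge 0$, \emph{and} satisfies \eqref{compslack}, then $H(\nu\|\nu_*)\ge 0$ yields $h(\nu)\le h(\nu_*)$ for every feasible $\nu$. One caveat worth recording is that slackness genuinely must be part of the hypothesis there, even though the literal ``if'' direction of the lemma omits it: with the single constraint $\int_\R x^2\,\nu(dx)\le 10$ and $\mu^*>0$ chosen so that $\nu_*$ is a centered Gaussian of variance $2$, the measure $\nu_*$ has the stated form with non-negative constants and is feasible, yet the variance-$10$ Gaussian has strictly larger entropy. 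So your reading (multipliers satisfying the full KKT conditions, including \eqref{compslack}) is the correct one, and it is also how the authors use the lemma: in \eqref{nustar2} and in Remark \ref{rmk-beta3} the relevant inequality constraint is either active or carries a zero multiplier.

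The genuine gap is in how you propose to obtain \eqref{compslack}. It cannot be ``read off from the sufficiency inequality by setting $\nu=\nu_*$'': with $\nu=\nu_*$ the relative-entropy step is the identity $0\ge 0$, and the chain $h(\nu_*)= 1+\kappa_0^*+\sum_i\lambda_i^*\alpha_i+\sum_j\mu_j^*\int_\R s_j\,d\nu_*\le 1+\kappa_0^*+\sum_i\lambda_i^*\alpha_i+\sum_j\mu_j^*\beta_j$ forces nothing about the last inequality being tight --- tightness there \emph{is} complementary slackness, so the argument is circular. Slackness (and the existence of the multipliers in the necessity direction) has to come from the KKT side: either cite convex-duality results under a Slater-type constraint qualification, which is effectively what the paper does via \cite[\S 5.5.2]{boyd2004convex}, or argue directly that if $\mu_j^*>0$ while $\int_\R s_j\,d\nu_*<\beta_j$, then slightly decreasing $\mu_j^*$ within the exponential family (renormalizing and re-solving for the equality multipliers) produces a feasible measure of strictly larger entropy, contradicting maximality. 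Your Lagrangian first-variation sketch is the right mechanism for necessity, but, as you yourself flag, it is not a proof without justifying the constraint qualification and the admissibility of the density perturbations; since the paper settles for citations at exactly this point, closing your write-up by invoking those references for the necessity/KKT step would match the paper's level of rigor, while the Gibbs-inequality computation can stand as the self-contained part.
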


The preceding lemma is standard. See \cite[\S 12.1]{cover2001info} for a slight simplification of \eqref{maxent} with only equality constraints, or see \cite[Ex. 5.3]{boyd2004convex} for a version of \eqref{maxent} with discrete entropy and both equality and inequality constraints. The claim \eqref{compslack} is the so-called ``complementary slackness" condition  (see, e.g., \cite[\S5.5.2]{boyd2004convex}).

As a final preliminary, define the following family of probability measures for $q \in [1,\infty)$, $\beta > 0$:
\begin{equation}\label{mupb}
  \mu_{q,\beta}(dx) \doteq \frac{1}{2(\beta q)^{1/q}\Gamma(1+\tfrac{1}{q})} e^{-|x|^q/(\beta q)}dx, \quad x\in \R.
\end{equation}
Note that $\mu_{p,1}$ corresponds to $\mu_{p}$ of \eqref{gengsn}.

\begin{proof}[Proof of Theorem \ref{cor-asymp}]
For a random variable $X\sim \mu_q$, note that $\beta^{1/q} X \sim \mu_{q,\beta}$. Due to the unconditional limit theorem Proposition \ref{lem-pbm}, under either the surface measure or the cone measure, we have  $\lambda_{n,q}^{(\beta,k)}\Rightarrow \mu_{q,\beta}^{\otimes k}$, where $\beta \le \beta_{p,q}$ of \eqref{betapq}, and $\mu_{q,\beta}$ is as in \eqref{mupb}. Therefore, it suffices to show that under either the surface measure or the cone measure, the conditional distribution $\widehat{\lambda}_{n,p|q}^{(\beta+\epsilon,k)}$ also converges weakly to the same limit $\mu_{q,\beta}^{\otimes k}$. In view of \eqref{clln2} of Proposition \ref{prop-clln}, it suffices to show that when $C=[0,\beta]$, the unique maximizer $\nu_*$ of \eqref{nustar} satisfies $\nu_* = \mu_{q,\beta}$. Note that due to the maximum entropy calculations of Lemma \ref{lem-maxent}, we have for $\beta > 0$,
\begin{equation}\label{nustar2}
  \mu_{q,\beta} = \arg\max\{h(\nu) : m_q(\nu) \le \beta \}.
\end{equation}
To show that $\mu_{q,\beta} = \nu_*$, it suffices to show that $m_p(\mu_{q,\beta}) \le 1$. After some elementary calculations, we find that since $\beta \le \beta_{p,q}$, and using the expression for $\beta_{p,q}$ in \eqref{betapq},
\begin{equation*}
m_p(\mu_{q,\beta}) = \beta^{p/q} q^{p/q} \frac{\Gamma(\frac{p+1}{q})}{\Gamma(\frac{1}{q})} \le \frac{1}{q^{p/q}}\left(\frac{\Gamma(\tfrac{1}{q})}{\Gamma(\tfrac{p+1}{q})}\right)  q^{p/q}\frac{\Gamma(\frac{p+1}{q})}{\Gamma(\frac{1}{q})} = 1. \end{equation*}\end{proof}

\begin{remark}[``small" $\beta$]\label{rmk-beta}
The constant $\beta_{p,q}$ is chosen such that for $\beta \le \beta_{p,q}$ and the interval $C=[0,\beta]$, the variational problem of \eqref{nustar} has an explicit solution with a natural geometric interpretation. Note that the explicit solution to the simpler variational problem  \eqref{nustar2} holds for all $\beta > 0$. However, the original variational problem of \eqref{nustar} involves not only the $q$-th moment constraint $m_q(\nu)\in C$, but also an additional constraint on the $p$-th moment, $m_p(\nu)\le 1$. To simplify the variational problem \eqref{nustar}, it suffices to consider values of $\beta$ small enough such that $m_p(\mu_{q,\beta}) \le 1$, so that the maximizer of \eqref{nustar2} is also the maximizer of \eqref{nustar} when $C=[0,\beta]$. It is easy to see that
\begin{equation}\label{bpqcalc}
  m_p(\mu_{q,\beta_{p,q}}) = \beta_{p,q}^{p/q} q^{p/q}  \frac{\Gamma(\frac{p+1}{q})}{\Gamma(\frac{1}{q})} = 1.
\end{equation}
That is, $\beta_{p,q}$ is the threshold value such that for $\beta \le \beta_{p,q}$, we have $m_p(\mu_{q,\beta}) \le 1$. Note however, that this discussion of $\beta_{p,q}$ (and thus, Theorem \ref{cor-asymp}) is valid only for $p\in[1,\infty)$, since for $p=\infty$, we have $m_\infty(\mu_{q,\beta}) = \infty$ for all $q < \infty$ and $\beta > 0$. That is, for $q<\infty$, there is no value of $\beta_{\infty,q} > 0$ such that an analog of \eqref{bpqcalc} can hold for $p=\infty$.
\end{remark}

\begin{remark}[``large" $\beta$"]\label{rmk-beta2}
Now suppose $ \beta \ge \overline{\beta}_{p,q}  \doteq m_q(\mu_p)$. An easy consequence of Lemma \ref{lem-maxent} is that
\begin{equation}\label{nustar3}
\mu_p = \arg\max\{ h(\nu) : m_p(\nu) \le 1\}.
\end{equation}
For $\beta \ge \overline{\beta}_{p,q}$ and $C=[0,\beta]$, the $q$-th moment constraint of \eqref{nustar}  is automatically satisfied by $\nu=\mu_p$, so the maximizer of \eqref{nustar3} is also the maximizer of \eqref{nustar}. In this case, the ``conditional" limit of Proposition \ref{prop-clln} is equivalent to the ``unconditional" limit of  Proposition \ref{lem-pbm}. In other words, for sufficiently large $\beta$, the $\ell^q$ norm conditioning event of \eqref{condlaw} is extraneous.
\end{remark}

\begin{remark}[``intermediate" $\beta$]\label{rmk-beta3}
As for $C=[0,\beta]$ and $\beta_{p,q} < \beta < \overline{\beta}_{p,q}$, this regime is less amenable to an immediate geometric interpretation. Whereas the small $\beta$ regime of Theorem \ref{cor-asymp} and Remark \ref{rmk-beta} allows an $\ell^q$ sphere interpretation via the measure $\mu_{q,\beta}$, and the large $\beta$ regime of Remark \ref{rmk-beta2} allows an $\ell^p$ sphere interpretation via the measure $\mu_p$, we have different behavior in the intermediate $\beta$ regime. Recall that the maximum entropy considerations of Lemma \ref{lem-maxent} imply that the unique maximizer of Proposition \ref{prop-clln} is of the form 
\begin{equation}\label{nustarform}
  \nu_*(dx) = \exp\left(-1-\kappa_0 -\kappa_p|x|^p - \kappa_q|x|^q\right),
\end{equation}
with constants $\kappa_0,\kappa_p,\kappa_q$ such that $\nu_*$ is a probability measure that satisfies $m_p(\nu_*)\le 1$ and $m_q(\nu_*) \le \beta$. To gain insight on these constants, consider the following cases:
\begin{itemize}
\item if $\kappa_p = 0$ and $\kappa_q > 0$, then the complementary slackness condition of \eqref{compslack} implies $m_q(\nu_*) =\beta$, so the form \eqref{nustarform} implies $\nu_* = \mu_{q,\beta}$. But this is not a feasible solution, since $m_p(\mu_{q,\beta}) > 1$ for $\beta > \beta_{p,q}$;
\item similarly, if $\kappa_p > 0$ and $\kappa_q = 0$, then we have $\nu_* = \mu_p$, but this is not a feasible solution since $m_q(\mu_p) = \overline{\beta}_{p,q} > \beta$;
\item lastly, if $\kappa_p=0$ and $\kappa_q =0$, then $\nu_*$ is not a probability measure for any choice of $\kappa_0$.
\end{itemize}
Therefore, it must be the case that $\kappa_p > 0$ \emph{and} $\kappa_q > 0$, which implies that $\nu_*$ of \eqref{nustarform} is not of the form $\mu_{r,b}$ for any $r\in[1,\infty)$, $b> 0$. Instead, $\nu_*$ is of an exponential family that is genuinely different from the generalized normal distributions.
\end{remark}

In this paper, we have only discussed applications of Proposition \ref{prop-clln} to intervals $C$ of the form $C=[0,\beta]$, and primarily for small $\beta$, with some discussion of larger $\beta$ in Remark \ref{rmk-beta2} and Remark \ref{rmk-beta3}. We leave for future work the question of finding other examples of intervals $C\subset\R$ which lead to an explicit and meaningful maximizing measure $\nu_*$, and other geometric conditional laws.


\bibliography{ldpbib}
\bibliographystyle{amsplain}

\end{document}